\theoremstyle{definition}
\newtheorem{Cor}{Corollary}
\newtheorem{Prop}{Proposition}
\newtheorem{Cond}{Assumption}
\newtheorem{example}{Example}
\newtheorem{remark}{Remark}
\newtheorem{theorem}{Theorem}
\DeclarePairedDelimiter\floor{\lfloor}{\rfloor}
\newcommand{\abs}[1]{\left\vert#1\right\vert}
\newcommand{\Su}{S}
\newcommand{\Cof}{{\Xi}}
\DeclareMathOperator{\Part}{\mathcal{P}}
\newcommand{\id}{\mathrm{Id}}
\newcommand{\rc}{\mathrm{Cr} }
\def\state{\varphi }
\def\A{\mathcal{A}}
\def\M{{\state}_{N,\pi}}
\def\T{ \mathbf{T}}
\def\E{\mathbb{E}}
\def\mutylda{\widetilde{Q}}
\def\muh{Q}
\def\C{{\mathbb C}}
\def\D{B_{\alpha,q}}
\def\Pro{{\mathbb P}}
\def\R{{\mathbb R}}
\def\N{{\mathbb N}}
\def\K{\mathcal{K}}
\def\d{{\rm d}}
\def\6{\, {\rm d}}
\def\1{1}
\def\i{{\rm i}}
\def\ri{{\rm i}}
\def\B{b_{\alpha,q}}
\def\G{G_{\alpha,q}}
\def\L{L_{\Krop}}
\def\Y{ \mathrm{Y}_{N,\pi_f}}
\def\YM{ \mathrm{Y}_{M,\pi_f}}
\def\r{r}
\def\Krop{\Delta}
\def\F{\mathcal{F}_{\rm fin}(H)}
\def\id{I}
\def\qMP{{\rm MP}}
\def\P{\mathcal{P}}
\def\PB{\mathcal{P}^B}
\def\InNB{\text{\normalfont Nest}}
\def\NB{\text{\normalfont NB}}
\newcommand{\e}{{\epsilon}}
\numberwithin{equation}{section}
\def\cvput#1[#2]{\pnode(#1,1){#1} \pscircle*(#1,1){.1} \rput(#1,.5){$#2$}}
\def\cvpuW#1[#2]{\pnode(#1,1){#1} \pscircle*(#1,1){.1} }
\def\cvpuDots#1[#2]{\pnode(#1,1){#1} \dots }
\newcommand{\Comment}[1]{}
\title[Central limit theorem  associated to Gaussian operators of type B]{Central limit theorem  associated to Gaussian operators of type B}
\author[Wiktor Ejsmont]{Wiktor Ejsmont}
\address { 
Wroc\l aw University of Economics, ul.\ Komandorska 118/120, 53-345 Wroc\l aw, Poland}
\email{wiktor.ejsmont@gmail.com}
\subjclass[2000]{Primary 60F05; 46L53;; Secondary 60B20; 81S05}
\begin{document}
\maketitle
\begin{abstract}
Speicher  \cite{S92}  showed a fundamental fact in noncommutative probability theories and generalized the Central Limit Theorem (CLT).  
This result  provides a very important tool to investigate noncommutative random variables. 
Nou \cite{N06} used  Speicher's result to provide Hiai's $q$-Araki–Woods von Neumann
algebras.
Also Biane \cite{Bi97} employed Speicher's central limit theorem to deduce Nelson's inequalities for
the functors $\Gamma_q$.
 In this article we formulate the CLT associated to Gaussian operators of type B -- see \cite{BEH15}, where important role is played by colored pair partitions. 
 Then we present a certain family of noncommutative random matrix models for the $(\alpha,q)$--deformed Gaussian random variables.
\end{abstract}
\section{Introduction}
\subsection{The deformed Gaussian variables of type B} \label{sectionmoment}
At the beginning we introduce essential
information to deal with Gaussian variables of type B (
correspond to 
 Coxeter groups of type B; see \cite{BEH15} for more details). 
Let $H_\R$ be a separable real Hilbert space and let $H$ be its complexification with inner product $\langle\cdot,\cdot\rangle$, linear on the right component 
and anti-linear on the left. When considering elements in $H_\R$, it holds true that $\langle x,y\rangle=\langle y,x\rangle$.
We also assume that there exists a self-adjoint involution $H\ni x\mapsto \bar{x}\in H$, i.e. a self-adjoint linear bounded operator on $H$ such that the double 
application of it becomes the identity operator.
   Let $\F$ be the (algebraic) full Fock space over $H$ defined as 
$
\F:= \bigoplus_{n=0}^\infty H^{\otimes n} ,\label{fockspace}
$ 
with convention that $H^{\otimes 0}=\C\Omega$ is a one-dimensional normed unit vector called a vacuum. Note that elements of $\F$ are finite linear combinations 
of the elements from $H^{\otimes n}, n\in \N\cup\{0\}$ and we do not take the completion.
The \emph{Gaussian operator of type B} or \emph{$(\alpha,q)$--Gaussian operator} is 
 $$\G(x)= \B(x) +\B^\ast(x)\qquad x \in H,\text{ }\alpha,q\in (-1,1),$$ where operators $\B(x)$ and their adjoints $\B^\ast(x)$ fulfill deformed commutation
relations
\begin{equation*}
\B(x)\B^\ast(y)- q \B^\ast(y)\B(x)= \langle  x,y \rangle \id + \alpha \langle x, \bar{y} \rangle\, q^{2 N}. 
\end{equation*}
In the equation above $q^{2 N}$ is the operator on $\F$ defined by the linear extension of $q^{2 N}\Omega=1$ and  
$q^{2 N}x_1 \otimes \dots \otimes  x_n =q^{2 n} x_1 \otimes \dots \otimes  x_n$.
 These operators act on a Hilbert space $\F$, with the property that $\B(x)\Omega=0.$
This information is sufficient to compute the moments of the type-B Gaussian operator 
with respect to the vacuum vector state $\langle\Omega,\cdot\Omega\rangle$ (playing the role of expectation). In particular, we have the following result -- 
see \cite{BEH15};
for any $n\in \N$ and  $x_1,\dots,x_{2n+1} \in H_\R$ we have  
\begin{align*}
&\langle\Omega, \G(x_{1})\cdots \G(x_{2n+1})\Omega\rangle=0, \\
&\langle\Omega, \G(x_{1})\cdots \G(x_{2n})\Omega\rangle=
\displaystyle\sum_{\pi_f \in \PB_2(2n)} \alpha^{\NB(\pi_f)}q^{\rc(\pi)+2 \InNB(\pi_f)}\prod_{\substack{\{i,j\} \in \pi \\ f(\{i,j\})=1} }\langle x_i, x_j\rangle \prod_{\substack{\{i,j\} \in \pi\\ f(\{i,j\})=-1}} \langle x_i,\overline{x_j}\rangle,
\end{align*} 
where  $\pi_f$  are the pair partitions of type B with some special function on it  ($\NB,\rc,\InNB$) -- for the reader's
convenience we shall introduce this definition in Subsection  \ref{sect:partition}.
\subsection{Distribution}
A distribution of a random
variable corresponding to the bounded self-adjoint operator $X$ is a measure $\mu$ supported
on the real line $\R$ such that
\begin{align*}
&\langle\Omega,X^n\Omega\rangle=\int_\R x^n d\mu(x).
\end{align*} 
Let $\qMP_{\alpha,q}$ be the probability measure supported on $(-2/\sqrt{1-q}, 2/\sqrt{1-q})$, with absolutely continuous part given by 
\begin{equation}\label{eq10}
\frac{d \qMP_{\alpha,q}}{dt}(t)=  \frac{(q;q)_\infty(\beta^2; q)_\infty}{2\pi\sqrt{4/(1-q) -t^2}}\cdot\frac{g(t,1;q) g(t,-1;q) g(t,\sqrt{q};q) g(t,-\sqrt{q};q)}{g(t, \i \beta;q)g(t,-\i \beta;q)}
\end{equation}
where 
\begin{align*}
g(t,b;q)&= \prod_{k=0}^\infty(1-4 b t (1-q)^{-1/2} q^k + b^2 q^{2k}), \qquad 
(s;q)_\infty=\prod_{k=0}^\infty(1-s q^{k}),\text{ } s \in \R, \\
\beta
&= 
\begin{cases}
\sqrt{-\alpha}, & \alpha \leq 0, \\
\ri \sqrt{\alpha}, & \alpha \geq0.   
\end{cases}
\end{align*}
 If $\alpha,q\in(-1,1)$ and $x \in H, \|x\|=1$, then $\qMP_{\alpha\langle x,\bar{x}\rangle,q}$ is the probability distribution of $\G(x)$ with respect to 
 the vacuum state (by weak continuity we may allow the parameters $(\alpha,q)$ of $\qMP_{\alpha,q}$ to take any values in $[-1,1] \times [-1,1]$). 
\begin{example}\label{przyklad1} In the special case we have
\begin{enumerate}[\rm(1)]
\item The measure $\qMP_{\alpha,1}$ is the normal law $(2(1+\alpha)\pi)^{-1/2}e^{-\frac{t^2}{2(1+\alpha)}}1_\R(t)\,\d t$;

\item The measure $\qMP_{0,0}$ is the standard Wigner's semicircle law $(1/2\pi)\sqrt{4-t^2}1_{(-2,2)}(t)\,\d t$;

\item The measure $\qMP_{0,q}$ is the $q$--Gaussian law~\cite{BS91};

\item The measure $\qMP_{\alpha,-1}$ is the Bernoulli law 
$(1/2)(\delta_{\sqrt{1+\alpha}}+\delta_{-\sqrt{1+\alpha}})$;

\item The measure $\qMP_{\alpha,0}$ is a symmetric free Meixner law~\cite{A03}. 
\end{enumerate}
\end{example}
\subsection{Noncommutative Central Limit Theorem }
Our motivation to find random matrices which
asymptotically behave like $(\alpha,q)$--Gaussian variables were inspired by a careful
study of the article of Speicher \cite{S92}.
In \cite{S92}, Speicher showed a CLT for the special measure of Example \ref{przyklad1}, i.e. for the $q$--Gaussian law. Speicher's CLT concerns 
a sequence of elements $b_1,\dots,b_n$, whose
terms pair-wise satisfy the deformed commutation relation $b_ib_j = s(j, i)b_jb_i$ with $s(j, i) \in \{-1, 1\}$. It is not a priori
clear that the  sums $\frac{b_1+b_1^\ast+\dots+b_n+b_n^\ast}{\sqrt{N}}$ should converge in some reasonable sense for $q$--Gaussians, but that 
indeed turns out to be the case.
Later this topic was deeply analyzed by several autors  \cite{S01,K05,B14}. \'Sniady \cite{S01}  constructed a family of Gaussian random matrix (i.e.  
their entries are Gaussian) models for the $q$--Gaussian random variables.   Kemp \cite{K05}  obtained similar model for the corresponding $q$-deformed
circular system replacing $2\times 2$ matrices (Speicher approach)  by  $4\times 4$ block-diagonal matrices.   
 A related problem of finding a random matrix model for the so-called $(q,t)$--Gaussian measure (see \cite{B12}) was solved by Blitvi\'c \cite{B14}.
 
  At this point it is worth to mention that in noncommutative literature recently we can also find some other approximations of $q$--Gaussian measure by 
  Wigner integrals (see Deya, Aur\'elien, Noreddine, Nourdin \cite{DNN13}). The authors prove a fourth moment theorem for multiple integrals driven by 
  a $q$--Brownian motion. This finding extends the recent results by Nualart and Peccati \cite{NP05} to $q$--deformed probability theory. 
\subsection{Goal and overview of the paper}  \label{subgoal}
 In this paper we show a certain noncommutative central limit theorem asserting  that if a
suitably selected family of centered noncommutative random variables $\T_1^{\e(1)},\dots,\T_n^{\e(n)}$, $\e(i)\in\{1,\ast,\prime\}$
has the property that each pair of them satisfies the commutation relation if and only if these pairs are independent in a general sense, then the distribution 
of the normalized mean 
converges to the distribution of a Gaussian random variable of type B.
 The paper is organized as follows. First, we present definitions of type-B partitions and introduce special notation. In section \ref{sekcjaCLT} we bring 
 in additional restrictions on our extra operators and give an abstract version of type-B CLT. Finally, in section 4, we construct a central object of this 
 paper, namely random matrices, which satisfy the assumption of CLT.
 We also show that these matrices asymptotically have the same expectation values as $(\alpha,q)$--deformed Gaussian random variables.

\section{Preliminaries } 
\subsection{Partitions and statistics }\label{sect:partition}
Let $[n]$ be the set $\{1,\dots,n\}$. For an ordered set $S$, let $\Part(S)$ denote the lattice of set partitions of that set. We write $B \in \pi$ if $B$ is 
a class of $\pi$ and we say that $B$ is a \emph{block of $\pi$}. We denote by $|\pi|$ the number of blocks of $\pi,$ and  $\#\pi:=\#S$ is the cardinality of $S$.  
A class of $\pi$ is called a \emph{singleton} if it consists of one element.
 \emph{A pair} (or a pair block) $V$ of a set partition is a block with cardinality 2. 
 We order the classes of $\pi=\{B_1,\dots,B_l\}$ according to the order of their last elements, i.e. $\max(B_1) < \max(B_2) < \ldots < \max(B_l)$. 
When $n$ is even, a set partition of $[n]$ is called a \emph{pair partition} if every block is a pair. The set of pair partitions of $[n]$ is denoted by $\P_2(n)$. 

The element  $\pi_f$  is called a \emph{set partition of  $[n]$  of type B} if  $\pi$  is a set partition of  $[n]$  and  $f:\pi\to \{\pm1\}$  is a coloring of 
the blocks of  $\pi$.  We denote by  $\PB(n)$  the set of all set partitions of  $[n]$  of type B. 
The notation  $\PB_{2}(n)$  denotes the set of set partitions of  $[n]$  of type B such that each block is a pair with color  $\pm1$. 
\newline
\newline
\noindent
Now, we introduce some partition statistics for $\pi_f\in\PB_2(n)$. 
In the equation below, we skip the index $f$ in $\pi$ if our statistic does not depend on coloring. 
Let $\NB(\pi_f)$ be the set of negative blocks (i.e.\ blocks colored by $-1$). 

\noindent For two blocks $V, W$ of a set partition $\pi\in \P_2(n)$, we introduce the relations  $\text{cr}\text{ and nest}$ as follows 
\begin{align*}
V\stackrel{\text{cr}}{\sim}W 
&\iff
\text{there exist $i,j \in V, k,l\in W$ such that $i<k<j<l$,}
\\
V\stackrel{\text{nest}}{\sim}W 
&\iff
\text{if there are $i,j \in V$ such that $i <k <j$ for any $k\in W.$}
\end{align*}
For a set partition $\pi$ let $\rc(\pi)$ be the number of crossings of $\pi$, i.e.\ 
\begin{align*}
\rc(\pi)&=\#\{\{V,W\} \subset \pi \mid V\stackrel{\text{cr}}{\sim}W \}. 
\intertext{Let $\InNB(\pi_f)$ be the number of pairs of a negative block and a nesting block}
\InNB(\pi_f)&=\#\{(V,W) \in \pi_f \times \pi_f \mid  f(W)=-1, V \stackrel{\text{nest}}{\sim}W  \}. 
\end{align*}
\subsection{Some special notations} 
Let 
$h:[r]\to\N$ be a map. We denote by $\ker h$ the set partition
which is induced by the equivalence relation 
$$
k\sim_{\ker h} l
\iff
h(k)=h(l)
.
$$
Similarly, for a multiindex $\underline{i} =( i(1),i(2),\dots ,i(n))\in\N^n$ we denote its
kernel
$\ker\underline{i}$ by the relation $k\sim l$ if and only if $i(k)=i(l)$.
Note that writing $\ker\underline{i}=\pi$ will indicate that $( i(1),i(2),\dots ,i(n))$ is in
the equivalence class identified with the partition $\pi\in\P(n)$.

\noindent Given $\epsilon=(\epsilon(1), \dots, \epsilon(n))\in\{\1,\ast,\prime\}^n$, let $\PB_{2;\epsilon}(n)$ be the set of partitions
 $\pi_f\in \PB_{2}(n)$ such that when $\pi$ is written as
$
\pi =\{\{z_1,w_1\}, \dots, \{z_{n/2},w_{n/2}\}\} 
$  ($z_i<w_i$), then
$$
\e(z_i)=\ast \text{ and }  \e(w_i)= \left\{ \begin{array}{ll}
 \1& \textrm{if $f(\{z_i,w_i\})=1$}\\
\prime & \textrm{if $f(\{z_i,w_i\})=-1$},
\end{array} \right.
$$
for all $1 \leq i \leq n/2$. 

\noindent In order to simplify notation, for $i(1),\dots,i(2n)\in[N]$, $\pi_f\in\PB_{2;\epsilon}(2n)$ such that
 $\ker\underline{i}=\pi$ 
  we will denote 
\begin{align} \label{eq:defcoef}
 \begin{split}\Cof(\pi_f,{\underline{i}}):=&\prod_{\substack{\{z_j,w_j\},\{z_k,w_k\}\in\pi\\ \{z_j,w_j\}\stackrel{\text{cr}}{\sim}\{z_k,w_k\}\\f(\{z_j,w_j\})=1 }} \muh_{\ast,\1}(i(z_k),i(w_j))  \prod_{\substack{\{z_j,w_j\},\{z_k,w_k\}\in\pi\\ \{z_j,w_j\}\stackrel{\text{cr}}{\sim}\{z_k,w_k\}\\f(\{z_j,w_j\})=-1 }} \muh_{\ast,\prime}(i(z_k),i(w_j))\times
  \\  &
\prod_{\substack{ \{z_j,w_j\},\{z_k,w_k\}\in\pi\\  \{z_j,w_j\}\stackrel{\text{nest}}{\sim}\{z_k,w_k\} \\f(\{z_k,w_k\})=1}}\muh_{\ast,\ast}(i(z_j),i(z_k))\muh_{\ast,\1}(i(z_j),i(w_k)) \times
 \\  &    
\prod_{\substack{ \{z_j,w_j\},\{z_k,w_k\}\in\pi\\  \{z_j,w_j\}\stackrel{\text{nest}}{\sim}\{z_k,w_k\}\\f(\{z_k,w_k\})=-1 }}\muh_{\ast,\ast}(i(z_j),i(z_k))\muh_{\ast,\prime}(i(z_j),i(w_k)),
 \end{split} 
\end{align}
where  $\muh_{\cdot,\cdot}(\cdot,\cdot)\in \R$, will be specified in next section, 
crossing generated by  $\{z,w\}$ and the  pairs nested by  $\{z,w\}$, respectively. 
\section{CLT of type B}  \label{sekcjaCLT}
In this paper we are interested in $\ast$--probability spaces,
since this is the framework which provides us a nice example of random matrices.
\emph{A noncommutative probability space} $(\A,\state)$, if formed by a $\ast$--algebra $\A$, endowed with an antilinear $\ast$--operation and a positive, 
unital linear functional $\state: \A \to \C$, playing the role of expectation.
The  elements $X\in \A$ are called \emph{noncommutative random variables} (see \cite{NS06,VDN92,MS17} for more details). 
 
\begin{Cond} \label{warunek1}
We  
assume that $(\A,\state)$ contains some special sequence of operators $\{\T_i^{\e(i)}\}_{i\in\N}$, $\e(i)\in\{1,\ast,\prime\}$ (marked by bold letters), 
which satisfy the following conditions:
\begin{enumerate}
\item (vanishing means and some second mixed moments) for all $i \in \N$, we have  
 \begin{align*}&\state(\T_i^\ast) = \state(\T_i)=\state(\T_i^\prime)=0,\\
 &\state(\T_i^*\T_i^*)=\state(\T_i\T_i)=\state(\T_i\T_i^\ast)=\state(\T_i\T_i^\prime)=\state(\T^\prime_i\T_i^\prime)=\state(\T^\prime_i\T_i)=\state(\T^\prime_i\T_i^\ast)=0;
 \end{align*}
\item (uniform bounds) for  $\pi\in\P(n)$, some non-negative real $\varrho_\pi\in\R$ and all $\underline{i}\in \N^n$ such that $\ker \underline{i}=\pi,$  
the following inequality holds
\begin{align*}
\abs{\state\big(\prod_{j=1}^n\T_{i(j)}^{\e(j)}\big)}\leq\varrho_\pi;
\end{align*}
\item (state $\state$  factors over the interval partition) let $$\pi=\{\{1,\dots,k_1\},\{k_1+1,\dots,k_2\},\dots,\{k_{|\pi|-1}+1,\dots,k_{|\pi|}\}\}$$ 
be an interval
partition of $[n]$ and  $\underline{i}\in \N^n$ such that $\ker \underline{i}=\pi,$   then 
\begin{align*}
\state\big(\prod_{j=1}^n\T_{i(j)}^{\e(j)}\big)=\state\big(\T_{i(1)}^{\e(1)}\dots\T_{i(k_1)}^{\e(k_1)}\big)\state\big(\T_{i(k_1+1)}^{\e(k_1+1)}\dots\T_{i(k_2)}^{\e(k_2)}\big)\dots\state\big(\T_{i(k_{|\pi|-1}+1)}^{\e(k_{|\pi|-1}+1)}\dots\T_{i(k_{|\pi|})}^{\e(k_{|\pi|})}\big).
\end{align*}
This condition is in some sense equivalent to independence;
\item (commutation relation) for  $i \neq j$, $i , j\in \N$ and all $\e,\e'\in\{\1,\ast,\prime\}$ our operators satisfy  the relationship
\begin{align*}
&\T_i^\e\T^{\e'}_j=\muh_{\e,\e'}(i,j)\T^{\e'}_j\T_i^{\e},
\end{align*}
with the real-valued coefficients $\muh_{\e,\e'}(i,j)$;
\item  (asymptotic 
existence) for  all  $\pi_f \in \PB_{2,\epsilon}(2n)\neq \emptyset,$ the following limit exists
\begin{align}
 \begin{split}\lim_{N\to \infty }N^{-n}\sum_{\substack{ i(1),\dots,i(2n)\in[N] \\ \ker\underline{i} = \pi} }\left[\Cof(\pi_f,{\underline{i}})\prod_{\substack{\{z,w\} \in \pi\\ f(\{z,w\})=1} }\state(\T^\ast_{i(z)} \T_{i(w)}) \prod_{\substack{\{z,w\} \in \pi\\ f(\{z,w\})=-1}} \state( \T^\ast_{i(z)} \T^\prime_{i(w)})\right] .
 \end{split} \label{eq:limit}
\end{align}
Throughout the paper   $\lambda_{\pi_f}$ denotes the limit above.
\end{enumerate}
\end{Cond}
\begin{remark} 
(1). If  $\PB_{2,\epsilon}(2n)=\emptyset$,  (for example $\e(1)=\dots=\e(2n)=\ast$)  then we understand that  $\lambda_{\pi_f}=0.$
\newline
(2). \noindent We should assume that 
one of the two mixed moments vanishes: $\state(\T_i^*\T_i^\e)=0\text{ or }\state(\T_i^\e\T_i^*)=0 \text{ for } \e\in\{1,\prime\}.$
We chose $\state(\T_i^*\T_i^\e)\neq 0$, because this is compatible with the geometry form in \cite{BEH15}, where we used the right creator.
\newline
\noindent
(3). In order to keep the  positivity requirements, we should assume that the coefficients  $\muh_{\e,\e'}(i,j)$  satisfy some additional assumptions (they cannot take arbitrary values).  For example $$\state(\T_1^\ast\T_2^\ast\T_2\T_1)=\state((\T_2\T_1)^\ast\T_2\T_1)=\muh_{\ast,\1}(2,1)\muh_{\1,\1}(2,1)\state(\T_1^\ast\T_1)\state(\T_2^\ast\T_2)\geq 0.$$
This implies that  $\muh_{\ast,\1}(2,1)\muh_{\1,\1}(2,1)\geq 0$.  It is worth to emphasize that some other relations of this coefficient follow from  
assumption that  $\state$  factors over the interval partition. For example, $\state(\T_1^\ast\T_1\T_2^\ast\T_2)=\state(\T_2^\ast\T_2\T_1^\ast\T_1)$, which 
simply implies that $\muh_{\ast,\ast}(1,2)\muh_{1,\ast}(1,2)\muh_{1,1}(1,2)\muh_{\ast,1}(1,1)=1.$ 
Rather than providing more explicit conditions for the  corresponding  relation we just 
a priori assume that sequence of operators $\{\T_i^{\e(i)}\}_{i\in\N}$ come from a concrete $\ast$--algebra and coefficients above are good.  
\end{remark}
 We now state the main result of this section, which extends the “deterministic formulation” of the noncommutative CLT
of \cite{S92}. 'Deterministic' means that the commutation sequence is now fixed. In the next section we will show that these coefficients may be randomly selected.
\begin{theorem}\label{TwCLT} Let  $\{\T_i,\T_i^{\prime},\T_i^{\ast}\}_{i\in\N}\in\A$ be a sequence of operators, which satisfy Assumption \ref{warunek1}.
Then we have for the sums
$$\Su_N=\frac{\T_{1}+\dots+\T_{N}}{\sqrt{N}},\text{ }\Su^\ast_N=\frac{\T^\ast_{1}+\dots+\T^\ast_{N}}{\sqrt{N}}\text{ and } \Su^\prime_N=\frac{\T^\prime_{1}+\dots+\T^\prime_{N}}{\sqrt{N}},$$
 for all even $r\in \N$
\begin{align}\label{glownerownanie}
&\lim_{N\to \infty}\state(\Su_N^{\e(1)}\dots\Su_N^{\e(r)})=\sum_{\pi_f\in\PB_{2,\epsilon}(r)}\lambda_{\pi_f}.
\end{align}
If $r$  is odd, then the limit above is zero. 
\end{theorem}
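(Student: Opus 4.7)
My plan follows the moment-method scheme of Speicher's original noncommutative CLT \cite{S92}, refined to track the type-B coloring. First I expand by multilinearity,
\begin{equation*}
\state\bigl(\Su_N^{\e(1)}\cdots \Su_N^{\e(r)}\bigr)
= N^{-r/2}\sum_{\underline{i}\in[N]^r}\state\bigl(\T_{i(1)}^{\e(1)}\cdots\T_{i(r)}^{\e(r)}\bigr),
\end{equation*}
and reorganize the sum by the kernel partition $\pi=\ker\underline{i}\in\P(r)$. Since the number of multiindices with $\ker\underline{i}=\pi$ is $N(N-1)\cdots(N-|\pi|+1)=N^{|\pi|}(1+o(1))$ and the uniform bound Assumption \ref{warunek1}(2) dominates each summand by $\varrho_\pi$, the total contribution of $\pi$ is $O(N^{|\pi|-r/2})$. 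Consequently only partitions with $|\pi|\geq r/2$ can survive in the limit.

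Next I rule out every partition other than pair partitions with the correct $\e$-pattern. If $\pi$ contains a singleton block $\{k\}$, I apply the commutation relations of Assumption \ref{warunek1}(4) to move $\T_{i(k)}^{\e(k)}$ to one end of the word, picking up a scalar factor and turning the singleton into an interval block. The interval factorization Assumption \ref{warunek1}(3) then peels off a factor $\state(\T_{i(k)}^{\e(k)})$, which vanishes by Assumption \ref{warunek1}(1). So singletons are forbidden; all remaining blocks have size $\geq 2$, which forces $|\pi|\leq r/2$. For odd $r$ this leaves nothing with $|\pi|\geq r/2$, giving the zero limit; for even $r$ the survivors are exactly the pair partitions, with $|\pi|=r/2$.

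For a pair partition $\pi=\{\{z_1,w_1\},\dots,\{z_{r/2},w_{r/2}\}\}$ with $z_j<w_j$, I use the commutation relations to transport each $\T_{i(w_j)}^{\e(w_j)}$ leftwards until it sits immediately after $\T_{i(z_j)}^{\e(z_j)}$, converting the word into $r/2$ adjacent pairs whose block structure is an interval partition. Each elementary swap contributes a coefficient $\muh_{\e,\e'}(i,j)$; tracking which commutations occur shows that an inner pair $\{z_j,w_j\}$ crossing an outer pair $\{z_k,w_k\}$ contributes a single crossing factor indexed by $\e(w_j)$, while an outer pair $\{z_k,w_k\}$ nesting $\{z_j,w_j\}$ contributes two factors (one for moving $\T_{i(w_k)}^{\e(w_k)}$ past $\T_{i(z_j)}^{\e(z_j)}$ and one for moving it past $\T_{i(w_j)}^{\e(w_j)}$). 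Matching these with the four products in \eqref{eq:defcoef} identifies the accumulated scalar as $\Cof(\pi_f,\underline{i})$, with coloring $f(\{z_j,w_j\})=1$ if $\e(w_j)=\1$ and $f(\{z_j,w_j\})=-1$ if $\e(w_j)=\prime$. After rearrangement, Assumption \ref{warunek1}(3) factors the state into $\prod_j \state(\T_{i(z_j)}^{\e(z_j)}\T_{i(w_j)}^{\e(w_j)})$; Assumption \ref{warunek1}(1) kills every factor unless $\e(z_j)=\ast$ and $\e(w_j)\in\{\1,\prime\}$, which is exactly the definition of $\PB_{2,\epsilon}(r)$.

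Collecting everything and dividing by $N^{r/2}$,
\begin{equation*}
\state(\Su_N^{\e(1)}\cdots\Su_N^{\e(r)}) = \sum_{\pi_f\in\PB_{2,\epsilon}(r)} N^{-r/2}\!\!\sum_{\ker\underline{i}=\pi}\!\!\Cof(\pi_f,\underline{i})\!\!\prod_{\substack{\{z,w\}\in\pi\\ f(\{z,w\})=1}}\!\!\state(\T_{i(z)}^{\ast}\T_{i(w)})\!\!\prod_{\substack{\{z,w\}\in\pi\\ f(\{z,w\})=-1}}\!\!\state(\T_{i(z)}^{\ast}\T_{i(w)}^{\prime}) + o(1),
\end{equation*}
and the asymptotic existence Assumption \ref{warunek1}(5) converts each inner sum to $\lambda_{\pi_f}$, producing \eqref{glownerownanie}. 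The main obstacle I anticipate is the commutation bookkeeping of the previous paragraph: verifying that the scalars produced by transporting the $w_j$-operators into adjacent position compose to exactly $\Cof(\pi_f,\underline{i})$, and in particular that crossings contribute one $\muh_{\ast,\cdot}$ factor while nestings contribute two, with the subscripts dictated by the colorings $f(\{z_j,w_j\})$ and $f(\{z_k,w_k\})$ respectively. The remaining steps are essentially Speicher's argument carried over to the type-B coloring.
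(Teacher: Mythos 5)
Your skeleton is the same as the paper's: expand by multilinearity, sort by kernel partitions, eliminate singletons via commutation plus interval factorization plus vanishing means, use the uniform bound and the count $\binom{N}{|\pi|}|\pi|!$ to discard $|\pi|<r/2$, observe that Assumption \ref{warunek1}(a) forces $(\e(z_j),\e(w_j))\in\{(\ast,\1),(\ast,\prime)\}$ so only $\PB_{2,\epsilon}(r)$ survives, normal-order, factorize, and invoke Assumption \ref{warunek1}(e). All of that matches the paper. The problem sits exactly in the step you yourself flag as the ``main obstacle'': your commutation bookkeeping does not produce the coefficient $\Cof(\pi_f,\underline{i})$ of \eqref{eq:defcoef}.

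Every factor in \eqref{eq:defcoef} has first subscript $\ast$ and first argument equal to the index of the \emph{left} ($\ast$) leg of a pair: the definition encodes the scalars obtained by shifting $\T^\ast_{i(z)}$ \emph{rightward} past everything its pair crosses or covers. That is what the paper does, by induction on the number of pairs, always peeling off the pair containing the last letter $2n$ and moving its $\T^\ast$ leg to position $2n-1$; for a nested pair this yields $\muh_{\ast,\ast}(i(z_n),i(z_j))\muh_{\ast,\e(w_j)}(i(z_n),i(w_j))$, verbatim the factors of \eqref{eq:defcoef}, and then the factorization gives \eqref{eq:induction}. Your scheme instead transports the \emph{right} leg $\T^{\e(w_k)}_{i(w_k)}$ of an outer pair $\{z_k,w_k\}$ leftward past both legs of a nested pair $\{z_j,w_j\}$; the scalars produced are $\muh_{\ast,\e(w_k)}(i(z_j),i(w_k))$ and $\muh_{\e(w_j),\e(w_k)}(i(w_j),i(w_k))$. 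The second factor has first subscript $\e(w_j)\in\{\1,\prime\}$, so it is not one of the four products in \eqref{eq:defcoef}, which for this nesting prescribes $\muh_{\ast,\ast}(i(z_k),i(z_j))\muh_{\ast,\e(w_j)}(i(z_k),i(w_j))$. (Crossings are unaffected: both schemes swap the same two operators $\T^\ast_{i(z_k)}$ and $\T^{\e(w_j)}_{i(w_j)}$ with $\T^\ast$ initially on the left, so the factor $\muh_{\ast,\e(w_j)}(i(z_k),i(w_j))$ comes out identically.) Your accumulated scalar does give the same value of $\state(\T_{i(1)}^{\e(1)}\cdots\T_{i(2n)}^{\e(2n)})$ in the end, but only because of consistency identities among the $\muh_{\e,\e'}$ (of the type $\muh_{\e,\e'}(i,j)\muh_{\e',\e}(j,i)=1$ together with the relations forced by interval factorization, cf.\ Remark 1(3)); these are not listed in Assumption \ref{warunek1} and you neither state nor prove them, and your resulting normal form also orders the adjacent pairs differently, so the factorization hypothesis must be re-applied to a different interval partition. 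To close the gap, either adopt the paper's direction of transport (shift the $\ast$ leg of the rightmost pair to the right and induct on the remaining $2n-2$ letters), which reproduces \eqref{eq:defcoef} literally, or add and justify the consistency identities that equate your coefficient with $\Cof(\pi_f,\underline{i})$ on every term whose product of pair-states is nonzero.
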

\begin{proof}Let us begin to show first that only pair partitions contribute to the corresponding limit. With the notation
\begin{align*}\M&=\frac{1}{N^{r/2}}\sum_{\substack{ i(1),\dots,i(r)\in[N] \\ \ker\underline{i}=\pi }}\state(\T_{i(1)}^{\e(1)}\dots\T_{i(r)}^{\e(r)}),
\intertext{we have}
\state(\Su_N^{\e(1)}\dots\Su_N^{\e(r)})&=\frac{1}{N^{r/2}}\sum_{ i(1),\dots,i(r)\in[N]}\state(\T_{i(1)}^{\e(1)}\dots\T_{i(r)}^{\e(r)})
=\sum_{\pi \in \P(r)}\M.
\end{align*}
First, we will show that partitions with singletons do not contribute to  $\M.$  Consider a partition  $\pi$  with a singleton. 
Then we can rewrite expression $\T_{i(1)}^{\e(1)}\dots\T_{i(r)}^{\e(r)}$  for every  $\ker\underline{i}= \pi$  (via
the commutation relations) into a form associated with interval partition  $\overline{\pi}$. 
In the new situation state  $\state$  factors the blocks in $\overline{\pi}$  and  $\state(\T_i^\ast) = \state(\T_i)=\state(\T_i^\prime)=0$,  
thus we get  $\M=0.$
\newline
\newline
Thus only such $\pi$ partitions contribute which have no singletons. Note that this implies that we can restrict our sums over $\pi\in\P(r)$  
to  $|\pi|\leq \floor{\frac{r}{2}}$.  Recalling that, by the assumption of the existence of uniform bounds for the moments 
and equation $$\sum_{\substack{ i(1),\dots,i(r)\in[N] \\ \ker\underline{i}= \pi} }1={N \choose |\pi|}|\pi|!,$$  we can estimate  
$|\M|\leq \frac{1}{N^{r/2}} {N \choose |\pi|}|\pi|!\varrho_\pi$ for some $\varrho_\pi\in \R^+$. Finally, we see that 
\begin{align}
\abs{\state(\Su_N^{\e(1)}\dots\Su_N^{\e(r)})}\leq\sum_{\pi\in\P(r)} \frac{1}{N^{r/2}} {N \choose |\pi|}|\pi|!\varrho_\pi.
\label{eq:ograniczenie}
\end{align}
Note that  $\lim_{N\to\infty} \frac{1}{N^{r/2}} {N \choose |\pi|}|\pi|!=0$ for every $|\pi|< \frac{r}{2}$
and  for such $\pi$  limit of $|\state(\Su_N^{\e(1)}\dots\Su_N^{\e(r)})|$, when $N\to \infty$, is equal to zero, because the sum in inequality \eqref{eq:ograniczenie} above is taken over a fixed $r$. This
means  that $|\pi|={r}/{2}$, thus $r$ must be even and $\pi$ has to be a pair partition. 
\newline
\newline
Let $r=2n$ and let us fix the partition $\pi=\{\{z_1,w_1\},\dots,\{z_{n},w_{n}\}\}\in\P_2(2n)$ designed through the multiindex $\underline{i}$ by $\ker\underline{i}=\pi$.
At the outset, recall that expression $\state(\T_{i(1)}^{\e(1)}\dots\T_{i(2n)}^{\e(2n)})$ (for $\ker\underline{i}=\pi$) has a possible non-zero value 
    if and only if $(\e(z_j),\e(w_j))\in\{(\ast,\1),(\ast,\prime)\}$ for all $j\in[n]$ (which is a simple consequence of Assumption \ref{warunek1}(a)). 
    This means that  $\pi_f\in\PB_{2,\e}(2n)$,
 where the coloring $f$ is determined by pairs  $(\ast,\1)\text{ and }(\ast,\prime)$.
 Now we will show that for the above-established partition  $\pi_f$ and all  indexes $\underline{i}$ such that $\ker\underline{i}=\pi$, commutation rules lead us to  
\begin{align}
\T_{i(1)}^{\e(1)}\dots\T_{i(2n)}^{\e(2n)}=\Cof(\pi_f,{\underline{i}})\T_{i(z_1)}^{\ast}\T_{i(w_1)}^{\e(w_1)}\dots\T_{i(z_{n})}^{\ast}\T_{i(w_{n})}^{\e(w_{n})}, \label{eq:induction}
\end{align}
where $\e(w_j)\in\{\1,\prime\}$, $j\in[n]$. 
The proof is given by induction. If $n=1$, then we have two pairs in $\PB_{2,\e}(2)$  i.e. $f(\{1,2\})=1$ and $f(\{1,2\})=-1$, which corresponds to
 $(\e(1),\e(2))=(\ast,\1)$ and  $(\e(1),\e(2))=(\ast,\prime)$, respectively. In this two situation  $\Cof(\pi_f,{\underline{i}})=1$ and hence the formula is true.
Suppose that the formula is true for $n-1$. 
We assume, that $\pi$ has 
\begin{itemize}
\item pairs $U_1,\dots, U_s$, which
 are crossing the pair $\{z_{n},w_{n}\}$,
\item pairs $W_1,\dots, W_t$, which are covered by $\{z_{n},w_{n}\}$.
\end{itemize}
In the proof we also use the notations  $\widetilde{\pi}=\pi\setminus\{\{z_{n},w_{n}\}\}$, $\widetilde{\underline{i}}=(i(z_1),\dots ,\check{i}(z_n),\dots ,\check{i}(w_n))$  
(superscript $\check{i}$  indicates that  $i$  has been deleted from the multiindex).
\newline
\newline
\noindent First, let us recall that pair $\{z_{n},w_{n}\}$ is the most right in $\pi$, namely $w_n=2n$ (which means that pairs $U_i$ cross them from the left side 
-- see Figure \ref{fig:FiguraExemple1}(a)). The general strategy of the proof is to use the commutation relation and shift the operator $\T^\ast_{i(z_n)}$  to the 
right until we "meet" the element $\T^{\e(w_n)}_{i(w_n)}$ (on the position $2n-1$). During this process 
we have 
 two situations.
\newline
\newline
\noindent Situation 1. When we move the operator $\T^\ast_{i(z_n)}$ to the right, we find the crossing  pair  $U_j=\{z_j,w_j\}$. In this case 
 we use the commutation relation between  $\T^\ast_{i(z_n)}$ and $\T^{\e(w_j)}_{i(w_j)}$. The corresponding contribution to coefficient is therefore given by $\muh_{\ast,\e(w_j)}(i(z_n),i(w_j))$.
This operation graphically corresponds to exchange indices $z_{n}$ and $w_j$, to yield a new type-B pair partition where $\{z_j,w_j\}$ to the left of $\{z_{n},w_{n}\}$ 
in the strict sense -- see Figure \ref{fig:FiguraExemple1}(a). Summarizing this, we get 
\begin{align*}
\rc(\{z_n,w_n\})&=
\prod_{\substack{\{z_j,w_j\}\in\{U_1,\dots, U_s\}\\ \{z_j,w_j\}\stackrel{\text{cr}}{\sim}\{z_n,w_n\} }} \muh_{\ast,\e(w_j)}(i(z_n),i(w_j))
\\&=\prod_{\substack{\{z_j,w_j\}\in\{U_1,\dots, U_s\}\\ \{z_j,w_j\}\stackrel{\text{cr}}{\sim}\{z_n,w_n\}\\f(\{z_j,w_j\})=1 }} \muh_{\ast,\1}(i(z_n),i(w_j))  \prod_{\substack{\{z_j,w_j\}\in\{U_1,\dots, U_s\}\\ \{z_j,w_j\}\stackrel{\text{cr}}{\sim}\{z_n,w_n\}\\f(\{z_j,w_j\})=-1 }} \muh_{\ast,\prime}(i(z_n),i(w_j)).
\end{align*}
\noindent Situation 2. When we shift the operator $\T^\ast_{i(z_n)}$  we encounter the nest  pair  $W_j=\{z_j,w_j\}$. Then, by using commutation relation two 
new terms appear. In the first action between  $\T^\ast_{i(z_n)}$  and $\T^\ast_{i(z_j)}$ we obtain the expression $\muh_{\ast,\ast}(i(z_n),i(z_j))$ -- see 
Figure \ref{fig:FiguraExemple1}(b). Next in the relationships between  $\T^\ast_{i(z_n)}$  and $\T^{\e(w_j)}_{i(w_j)}$ the second coefficient appears  $\muh_{\ast,\e(w_j)}(i(z_n),i(w_j))$ -- see Figure \ref{fig:FiguraExemple1}(c). Similarly as in Situation 1 this step can be illustrated to get a new type-B set partition, where the $\{z_{n},w_{n}\}$, 
have interval form. Altogether, we get
\begin{align*}&\displaystyle
{\InNB}(\{z_n,w_n\})=
\prod_{\substack{ \{z_j,w_j\}\in\{W_1,\dots,W_t\}\\  (z_n,w_n)\stackrel{\text{nest}}{\sim}\{z_j,w_j\} }}\muh_{\ast,\ast}(i(z_n),i(z_j))\muh_{\ast,\e(w_j)}(i(z_n),i(w_j))
=\\&\prod_{\substack{ \{z_j,w_j\}\in\{W_1,\dots,W_t\}\\  (z_n,w_n)\stackrel{\text{nest}}{\sim}\{z_j,w_j\} \\f(\{z_j,w_j\})=1}}\muh_{\ast,\ast}(i(z_n),i(z_j))\muh_{\ast,\1}(i(z_n),i(w_k)) 
\prod_{\substack{ \{z_j,w_j\}\in\{W_1,\dots,W_t\}\\  (z_n,w_n)\stackrel{\text{nest}}{\sim}\{z_j,w_j\}\\f(\{z_j,w_j\})=-1 }}\muh_{\ast,\ast}(i(z_n),i(z_j))\muh_{\ast,\prime}(i(z_n),i(w_j)).
\end{align*} 
  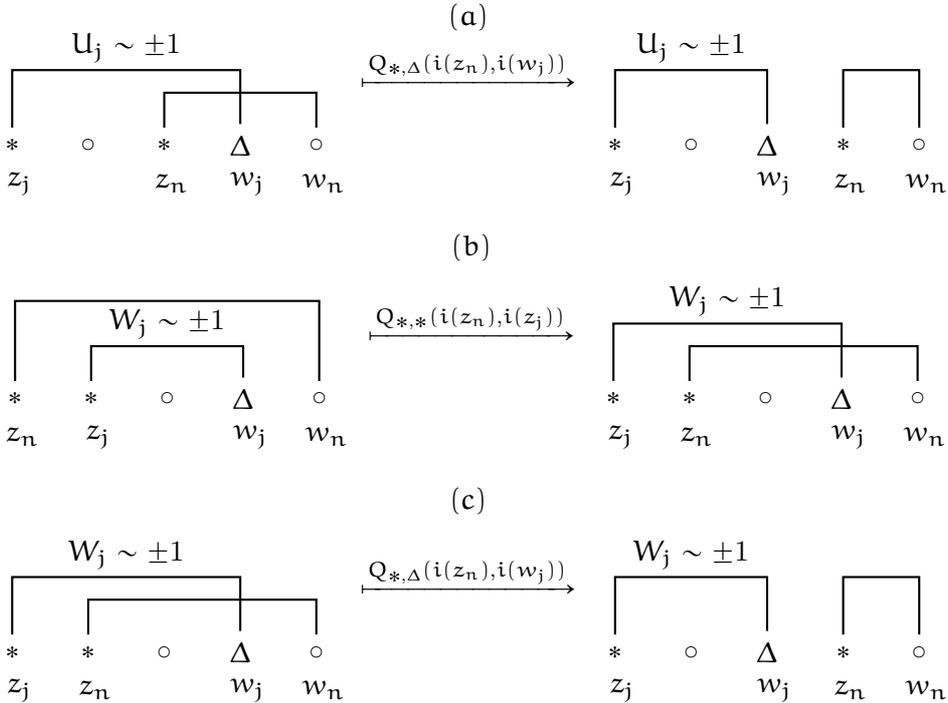
\begin{figure}[h]
\begin{center}
   \begin{tikzpicture}[thick,font=\small,scale=1]
     \path 
           (7,1.7) node[] (a) {$(a)$}
           (1,0) node[] (b) {$\ast$}
           (2,0) node[] (c) {$\circ$}
           (3,0) node[] (d) {$\ast$}
           (4,0) node[] (e) {$\Krop$}
           (5,0) node[] (f) {$\circ$}
            (1.1,-0.5) node[] (w1) {$z_j$}
            (4.1,-0.5) node[] (w1) {$w_j$}
            (3.1,-0.5) node[] (w1) {$z_n$}
            (5.1,-0.5) node[] (w1) {$w_n$}
            (2.5,1.3) node (fdf) {$U_j \sim \pm1$}
            (7,1) node (fdf) {$\xmapsto{\muh_{\ast,\Krop}(i(z_n),i(w_j))}$ };
     \draw (b) -- +(0,1) -| (e);
     \draw (d) -- +(0,0.7) -| (f);
     
   \end{tikzpicture}
    \begin{tikzpicture}[thick,font=\small,scale=1]
    \path 
           (1,0) node[] (b) {$\ast$}
           (2,0) node[] (c) {$\circ$}
           (3,0) node[] (d) {$\Krop$}
           (4,0) node[] (e) {$\ast$}
         (1.1,-0.5) node[] (w1) {$z_j$}
            (4.1,-0.5) node[] (w1) {$z_n$}
            (3.1,-0.5) node[] (w1) {$w_j$}
            (5.1,-0.5) node[] (w1) {$w_n$}
              (2,1.3) node (fdf) {$U_j \sim \pm1$}
           (5,0) node[] (f) {$\circ$};
     \draw (b) -- +(0,1) -| (d);
     \draw (e) -- +(0,1) -| (f);
   \end{tikzpicture}
   \end{center}  
   \begin{center}
 \begin{tikzpicture}[thick,font=\small,scale=1]
     \path 
          (7,2) node[] (a) {$(b)$}
           (1,0) node[] (b) {$\ast$}
           (2,0) node[] (d) {$\ast$}
           (3,0) node[] (c) {$\circ$}
           (4,0) node[] (e) {$\Krop$}
           (5,0) node[] (f) {$\circ$}
            (1.1,-0.5) node[] (w1) {$z_n$}
            (4.1,-0.5) node[] (w1) {$w_j$}
            (2.1,-0.5) node[] (w1) {$z_j$}
            (5.1,-0.5) node[] (w1) {$w_n$}
            (7,1) node (fdf) {$\xmapsto{\muh_{\ast,\ast}(i(z_n),i(z_j))}$ }
            (3,1) node (fdf) {$W_j\sim\pm 1$};
     \draw (b) -- +(0,1.3) -| (f);
     \draw (d) -- +(0,0.7) -| (e);
     
   \end{tikzpicture}
      \begin{tikzpicture}[thick,font=\small,scale=1]
     \path 
           (1,0) node[] (b) {$\ast$}
           (2,0) node[] (c) {$\ast$}
           (3,0) node[] (d) {$\circ$}
           (4,0) node[] (e) {$\Krop$}
           (5,0) node[] (f) {$\circ$}
            (1.1,-0.5) node[] (w1) {$z_j$}
            (4.1,-0.5) node[] (w1) {$w_j$}
            (2.1,-0.5) node[] (w1) {$z_n$}
            (5.1,-0.5) node[] (w1) {$w_n$}
            (2.5,1.3) node (fdf) {$W_j\sim \pm1$};
     \draw (b) -- +(0,1) -| (e);
     \draw (c) -- +(0,0.7) -| (f);
   \end{tikzpicture}
   \end{center}
   \begin{center}
       \begin{tikzpicture}[thick,font=\small,scale=1]
     \path 
           (7,2) node[] (a) {$(c)$}
           (1,0) node[] (b) {$\ast$}
           (2,0) node[] (c) {$\ast$}
           (3,0) node[] (d) {$\circ$}
           (4,0) node[] (e) {$\Krop$}
           (5,0) node[] (f) {$\circ$}
            (1.1,-0.5) node[] (w1) {$z_j$}
            (4.1,-0.5) node[] (w1) {$w_j$}
            (2.1,-0.5) node[] (w1) {$z_n$}
            (5.1,-0.5) node[] (w1) {$w_n$}
            (2.5,1.3) node (fdf) {$W_j\sim \pm1$}
            (7,1) node (fdf) {$\xmapsto{\muh_{\ast,\Krop}(i(z_n),i(w_j))}$ };
     \draw (b) -- +(0,1) -| (e);
     \draw (c) -- +(0,0.7) -| (f);

   \end{tikzpicture}
    \begin{tikzpicture}[thick,font=\small,scale=1]
    \path 
           (1,0) node[] (b) {$\ast$}
           (2,0) node[] (c) {$\circ$}
           (3,0) node[] (d) {$\Krop$}
           (4,0) node[] (e) {$\ast$}
         (1.1,-0.5) node[] (w1) {$z_j$}
            (4.1,-0.5) node[] (w1) {$z_n$}
            (3.1,-0.5) node[] (w1) {$w_j$}
            (5.1,-0.5) node[] (w1) {$w_n$}
              (2,1.3) node (fdf) {$W_j\sim \pm1$}
           (5,0) node[] (f) {$\circ$};
     \draw (b) -- +(0,1) -| (d);
     \draw (e) -- +(0,1) -| (f);
   \end{tikzpicture}
   \end{center}
\caption{The visualization of bringing a crossing and nesting partition into the interval form (in the induction step, where  $\Krop= \e(w_j)$, and the notation
	 $\sim \pm1$ means that a pair has color $\pm1$). 
}
\label{fig:FiguraExemple1}
\end{figure}
\newline
\noindent   
Finally, we can use induction with respect to $\widetilde{\pi}\in\PB_{2,\e}(2n-2)$ and $\widetilde{\underline{i}}$, because in the new situation the shifting 
operator $\T^\ast_{i(z_n)}$  together with $\T^{\e(w_n)}_{i(w_n)}$ correspond to the pair $\{z_n,w_n\}$ that does not affect crossing and nesting in partition $\widetilde{\pi}$, i.e. 
\begin{align*} 
\T_{i(1)}^{\e(1)}\dots\T_{i(2n)}^{\e(2n)}=\rc(\{z_n,w_n\})\InNB(\{z_n,w_n\})\underbracket{\T_{i(z_1)}^{\ast} \dots \check{\T}_{i(z_n)}^{\ast}\dots \T_{i(w_{n-1})}^{\e(w_{n-1})}}_{\text{induction} }\T_{i(z_{n})}^{\ast}\T_{i(w_{n})}^{\e(w_{n})}.
\end{align*}

\noindent  Now we can apply the factorization assumption to equation \eqref{eq:induction}, which yields  
\begin{align*}
\state\big(\T_{i(1)}^{\e(1)}\dots\T_{i(2n)}^{\e(2n)}\big)&=\Cof(\pi_f,{\underline{i}})\state\big(\T_{i(z_1)}^{\ast}\T_{i(w_1)}^{\e(w_1)}\big)\dots\state\big(\T_{i(z_{n})}^{\ast}\T_{i(w_{n})}^{\e(w_{n})}\big)
\\&=\Cof(\pi_f,{\underline{i}})\prod_{\substack{\{z,w\} \in \pi\\ f(\{z,w\})=1} }\state(\T^\ast_{i(z)} \T_{i(w)}) \prod_{\substack{\{z,w\} \in \pi\\ f(\{z,w\})=-1}} \state( \T^\ast_{i(z)} \T^\prime_{i(w)}).
\end{align*}
In the end, we sum over all possible choices of indices $i(1),\dots,i(2n)\in[N]$ such that $\ker\underline{i}=\pi$, which by existence of limit \eqref{eq:limit} 
 yields \eqref{glownerownanie} and completes the proof.
\end{proof}
\section{The Almost Sure Convergence}
Now we shall describe a concrete $\ast$--algebra and construct a family of random matrices (whose entries are classical random
variables), which fulfills  Assumption \ref{warunek1}, where we focus on the existence of limit \eqref{eq:limit}.  As a corollary, we conclude that these 
random matrices asymptotically behave like the Gaussian operator of type B. 

We assume that the entries of random matrices are the three kinds of random, independent variables $\{\muh(i,j)\}_{i\neq j \in \N}$, $\{\mutylda(i,j)\}_{  i\neq j \in \N }$ 
and $\{\Psi_i\}_{i\in \N}$, and we denote the probability measure on the probability space of these random structures by  $\Pro$. 
 Families  $\{\muh(i,j)\}_{i< j }$ and  $\{\mutylda(i,j)\}_{  i\neq j  }$  are drawn from a
collection of independent and identically distributed random variables (inside of each family), and $\muh(i,j)=\muh(j,i)$ for $i>j$. What is more, we assume that 
random variables $\{\Psi_i\}_{i\in \N}$ are independent. In order to get asymptotic results, it is necessary to impose some additional restrictions:
\begin{enumerate}
\item  $\muh(i,j)$  have a compact
support $D_1\subset \R$ which is separated from zero i.e $D_1\subsetneq \R\setminus (\epsilon,\epsilon)$, for some $\epsilon >0;$ 
\item $\mutylda(i,j)$ have a compact 
support $D_2\subset \R$ which is separated from zero;
\item $\Psi_i$  have a compact 
support contained in the set $D_3\subset \R$ and  $\sum_{i=1}^\infty Var(\Psi_i)<\infty.$
\end{enumerate}

 \begin{remark}
  Assumption $\sum_{i=1}^\infty Var(\Psi_i)<\infty$ is purely technical and use it in order to estimate a variance (this condition can be weakened, but 
  then it is not elegant). Random variables  $\{\Psi_i\}_{i\in \N}$  are associated with the parameter  $\alpha$. Our typical example is just a constant 
  function $\Psi_i=\alpha,$ for $i\in \N.$ Actually, we could have assumed in advance that the parameter $\alpha$ is deterministic in order to facilitate 
  some estimations, but our primary motivation was to make the model completely random. 
    \end{remark}
Let $M_2(\R)$ denote the algebra of $2\times 2$ real matrices. In the construction below we use special notation matrices $\{\sigma_x,\gamma_x\}_{x\in \R}\in M_2(\R)$ 
given by   
     \begin{align*}
      \sigma_x= 
      \begin{bmatrix}
        1 & 0  \\ 
          0&  x 
      \end{bmatrix}, 
\qquad      
         \gamma_{x}= 
      \begin{bmatrix}
        0 &  0\\ 
         x   &  0 
      \end{bmatrix}.   
    \end{align*}
\subsection{Random Matrix Model }
We define the $\ast$--representation as $\A_N:=M_2(\R)^{\otimes N}$, where the $\ast$ operation is the conjugate transpose. Let the elements
 $\T_{N,i},\T^\prime_{N,i}\in \A_N$ be given by
    \begin{align*} \T_{N,i}&=\sigma_{\muh({1,i})}\otimes \dots \otimes \sigma_{\muh({i-1,i})}\otimes \gamma_1 \otimes 
    \underbracket{ \sigma_1\otimes \dots \otimes \sigma_{1}}_{N-i \text{ times}},
      \\
      \T^\prime_{N,i}&=\sigma_{\mutylda({1,i})}\otimes \dots \otimes \sigma_{\mutylda({i-1,i})}\otimes \gamma_{\Psi_i} \otimes 
    \underbracket{ \sigma_{\muh(i+1,i)\mutylda(i+1,i)}\otimes \dots \otimes \sigma_{\muh(N,i)\mutylda(N,i)}}_{N-i \text{ times}}.
    \end{align*} 
    \begin{remark}
    In literature, the element  $\T_{N,i}$ is called Jordan-Wigner-transform and is well known to produce anti-commuting variables 
    (e.g. \cite{Bi97,EE93,S92}). The second matrix $\T^\prime_{N,i}$ is new and  might be called full or extended  Jordan-Wigner transform, 
    because there is no identity part, i.e. $ \sigma_1\otimes \dots \otimes \sigma_{1}.$
    \end{remark}
Note that the operator $\T_{N,i}^\ast$ is obtained by the transpose of $\T_{N,i}$.
Furthermore, let $\state_N:\A_N\to\C$ be the positive map $\state_N(a_1\otimes\dots \otimes a_N)=\langle e_1 a_1,e_1\rangle \dots \langle e_1 a_N,e_1\rangle$, 
where $a_1,\dots, a_N\in M_2(\R)$, $\langle \cdot,\cdot\rangle$  is the usual inner product on $\R^2$ and $e_1=
(1,0)$ is an element of the standard basis.   
We claim that the elements above satisfy Assumptions \ref{warunek1}(a)-(d). 
Indeed, restrictions $(a)$ and $(c)$ follow directly from definition. 
Note that $\sigma_x\sigma_y=\sigma_y\sigma_x$ and $\gamma_y^\ast\sigma_x=x\sigma_x\gamma_y^\ast,$ which by elementary manipulations on tensor products 
implies that Assumption \ref{warunek1}(d) is satisfied. The coefficients $\muh_{\e,\e'}(i,j)$  are  equal to 
 \begin{align}
 \label{eq:relationimprtantinproof1}&\muh_{\ast,\ast}(i,j)=\muh(i,j)\qquad \muh_{\ast,\1}(i,j)= \muh(i,j)\quad \muh_{\ast,\prime}(i,j)= \mutylda(i,j)&&\text{ for } i<j,
 \\ \label{eq:relationimprtantinproof2}&\muh_{\ast,\ast}(i,j)=\muh^{-1}(i,j)\quad \muh_{\ast,\1}(i,j)= \muh(i,j)\quad \muh_{\ast,\prime}(i,j)=\underbracket{\muh(j,i)\muh(i,j)}_{\muh^2(i,j)}\mutylda(i,j) &&\text{ for } i>j,
\end{align}
where we write directly only these coefficients which are important in the proof of Proposition \ref{prop:stochinterpolacja} below (some of remaining 
relations now follow by taking adjoints of equations \eqref{eq:relationimprtantinproof1} and \eqref{eq:relationimprtantinproof2}).  By the same token, 
we also get that $\muh_{\e,\e'}(i,j)$, where $\e,\e'\in\{1,\ast,\prime\}$ can be expressed as $\muh^a(i,j)\muh^b(j,i)\mutylda^c(i,j)$ for some $a,b,c\in\{-1,0,1\}.$ 
Thus, by establishing compact supports which are separated from zero, we can estimate
 $$|\muh_{\e,\e'}(i,j)|\leq \left[\max\big(\sup_{x\in D_1}|x|,\sup_{x\in D_1}(1/|x|)\big)\right]^2\max\big(\sup_{x\in D_2}|x|,\sup_{x\in D_2}(1/|x|)\big)=\K.$$ 
Furthermore, observe that for $\ker \underline{i}=\pi$
\begin{align*}
\left|\state_N\Big(\prod_{j=1}^n\T_{N,i(j)}^{\e(j)}\Big)\right|\leq \K^{Function(\pi)}\left(\max(1,\sup_{x\in D_3}|x|)\right)^{\#\pi}.
\end{align*}
Indeed we can use finitely many steps in order to transform  $\prod_{j=1}^n\T_{N,i(j)}^{\e(j)}$ into a form ${C(\pi)}\times \widetilde{\T}_{N,i(j)}^{\e(j)}$  
associated with the interval  partition $\widetilde{\pi}$, where $C(\pi)$ is a product of some coefficients $\muh_{\e,\e'}$, which are bounded by $\K$. 
 Now if $\widetilde{\pi}$ has the interval form, then the state $\state_N(\prod_{j=1}^n \widetilde{\T}_{N,i(j)}^{\e(j)})$ may take three possible values:
  $0,1$ or $\prod_{j\in S}\Psi_{i(j)}$ for some 
$S\subset [n]$, which finally explain the inequality above.
\begin{remark}
(1). In order to improve the clarity of writing in next subsection we skip index $N$ in $\state$ and operators $\T.$

\noindent (2). The state $\state$ takes random values, so assumption on a compact 
support is necessary in order to be able to control for higher-order moments.  

\noindent (3). It is worth to mention that operator $ \T_{N,i} + \T_{N,i}^\ast+\T_{N,i}^\prime$ is not self-adjoint, and becomes self-adjoint only in the 
trivial case, for
$\Psi_i=0$ for all $i\in \N$. 
This element plays a crucial role in asymptotic realization of Gaussian operator of type B. 
\end{remark}
\subsection{Stochastic interpolation}
At this point, we have a natural candidate of operators for Theorem \ref{TwCLT}, namely they have good properties, but still we are not sure whether the 
limit \eqref{eq:limit} exists. The following proposition explains this problem with almost sure convergence, i.e. we will show that if we put 
some restrictions on first moments, then we can easily describe the desired limit.
\begin{Prop} \label{prop:stochinterpolacja}
 Fix $q,\alpha\in(-1,1)$ and assume that  
\begin{align*}
&\E\big(\muh(i,j)\big)=q,\quad \E\big(\muh^2(i,j)\big)=1,\quad \E\big(\mutylda(i,j)\big)=q\quad\text{and}\quad  \E(\Psi_i)=\alpha.\end{align*}
Then for every $\pi_f\in \PB_{2,\e}(2n)$ 
the limit \eqref{eq:limit} exists (created from the entries of random matrices $\T^\ast_i,\T_i,\T^\prime_i$) and equals 
$$\lambda_{\pi_f}=\alpha^{\NB(\pi_f)}q^{\rc(\pi)+2 \InNB(\pi_f)}.$$ 
\label{prop:stochastycznainteroplacja}
\end{Prop}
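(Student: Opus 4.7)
The plan is to compute the expectation of the normalized sum, then upgrade convergence of the mean to almost sure convergence via a fourth-moment bound combined with Borel--Cantelli.

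First, I would compute the two second mixed moments that feed into \eqref{eq:limit}. Using $\gamma_1^\ast\gamma_1=\mathrm{diag}(1,0)$, the identity $\gamma_1^\ast\gamma_{\Psi_i}=\Psi_i\cdot\mathrm{diag}(1,0)$, and the facts that $\sigma_x^\ast\sigma_y=\sigma_{xy}$ and $\langle e_1,\sigma_x e_1\rangle=1$, a direct matrix calculation on the tensor-product definition of $\state_N$ yields $\state(\T_i^\ast \T_i)=1$ and $\state(\T_i^\ast \T_i^\prime)=\Psi_i$. The quantity inside the limit therefore simplifies to
\begin{equation*}
X_N(\pi_f):=\frac{1}{N^{n}}\sum_{\ker\underline{i}=\pi}\Cof(\pi_f,\underline{i})\prod_{\substack{\{z,w\}\in\pi\\ f(\{z,w\})=-1}}\Psi_{i(z)}.
\end{equation*}

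Second, I would compute $\E[X_N]$. By independence of $\{\muh(i,j)\}_{i<j}$, $\{\mutylda(i,j)\}_{i\neq j}$, and $\{\Psi_i\}_i$, and because each block of $\pi$ determines a single index value (so distinct pairs of blocks correspond to distinct unordered index pairs), the expectation of $\Cof(\pi_f,\underline{i})\prod\Psi$ factors block-pair by block-pair. The critical local calculation is for a nesting with outer block $V$ and inner block $W$: since $i(z_W)=i(w_W)$, the two nesting factors $\muh_{\ast,\ast}(a,b)$ and $\muh_{\ast,\e(w_W)}(a,b)$ sit at the same unordered pair $\{a,b\}=\{i(V),i(W)\}$. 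A case split on $a<b$ versus $a>b$ using the asymmetric formulae \eqref{eq:relationimprtantinproof1}--\eqref{eq:relationimprtantinproof2} reduces the product to $\muh^2(a,b)$ or $1$ when $f(W)=1$ (both with expectation $1$ thanks to $\E[\muh^2]=1$), and to $\muh(a,b)\mutylda(a,b)$ in either case when $f(W)=-1$ (expectation $q^2$ by independence and $\E[\muh]=\E[\mutylda]=q$). An analogous but shorter split shows each crossing contributes expectation $q$ regardless of orientation and color, and the $\Psi$--factors contribute $\alpha^{\NB(\pi_f)}$. Thus $\E[\Cof(\pi_f,\underline{i})\prod\Psi]=\alpha^{\NB(\pi_f)}q^{\rc(\pi)+2\InNB(\pi_f)}$ uniformly in admissible $\underline{i}$, and summing over the $N(N-1)\cdots(N-n+1)$ multi-indices with $\ker\underline{i}=\pi$ and dividing by $N^n$ gives $\E[X_N]\to\alpha^{\NB(\pi_f)}q^{\rc(\pi)+2\InNB(\pi_f)}$.

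Third, for the almost sure statement I would bound $\E|X_N-\E X_N|^4$. Expanding the fourth power into a sum over quadruples of multi-indices $(\underline{i}^{(1)},\ldots,\underline{i}^{(4)})$, the cumulant-type cancellation leaves only those quadruples in which at least one random variable is shared between two different factors; each such constraint forces an overlap of the involved index values and reduces the number of admissible quadruples by a factor of $N$. Combined with the uniform bound $|\muh_{\e,\e'}(i,j)|\le\K$ and compact support of $\Psi$, this yields $\E|X_N-\E X_N|^4=O(N^{-2})$, so $\sum_N\Pro(|X_N-\E X_N|>\eps)<\infty$ and Borel--Cantelli delivers almost sure convergence of $X_N$ to $\alpha^{\NB(\pi_f)}q^{\rc(\pi)+2\InNB(\pi_f)}$.

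The main obstacle is the expectation computation in the second step: the coefficients $\muh_{\ast,\e'}(i,j)$ are defined asymmetrically in $i,j$, so a priori $\E[\Cof]$ could depend on the ordering of the index values sampled. The key observation is that the two arguments of any nesting factor pair come from (outer block, inner block), and since the inner block's two positions share one index value, the two nesting factors always live at the same unordered index pair. This lets the potentially bad $\muh^{-1}$ branch of $\muh_{\ast,\ast}$ combine with $\muh$ (or with $\muh^2\mutylda$) to yield an expression whose expectation depends only on the color of the inner block, and the hypothesis $\E[\muh^2]=1$ is precisely what makes both orderings yield the same answer. The fourth-moment bookkeeping is routine but requires care in tracking how a shared random variable across two of the four factors constrains the underlying block-pair relations.
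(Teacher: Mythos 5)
Your computation of the limit itself coincides with the paper's: the same evaluation $\state(\T_i^\ast\T_i)=1$ and $\state(\T_i^\ast\T_i^\prime)=\Psi_i$, the same factorization of $\E\big(\Cof(\pi_f,\underline{i})\prod\Psi\big)$ over unordered pairs of blocks, and the same case split on the ordering of the two index values, with $\E(\muh^2(i,j))=1$ neutralizing the $\muh^{-1}$ branch of $\muh_{\ast,\ast}$; your ``key observation'' that the two nesting factors sit at the same unordered index pair is exactly the paper's Case 3, and your crossing computation matches Case 2. Where you genuinely differ is the concentration step. The paper bounds the \emph{second} moment: it shows $Var(\YM)=O(M^{-2})$ by noting that pairs $(\underline{i},\underline{k})$ sharing exactly one index value have vanishing covariance unless that value sits at negative blocks of both terms, and then controls that exceptional family using the hypothesis $\sum_i Var(\Psi_i)<\infty$ (this is precisely where that assumption enters); the remaining $O(M^{2n-2})$ pairs are handled by Cauchy--Schwarz. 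Your fourth-moment route reaches a summable bound $O(N^{-2})$ using only the uniform boundedness of $\muh_{\e,\e'}$ and $\Psi_i$, so it dispenses with the summable-variance assumption altogether --- a cleaner trade, at the cost of heavier bookkeeping. One point does need tightening: the surviving-term condition you state, ``at least one random variable is shared between two different factors,'' only yields $O(N^{-1})$, which is not summable. What the cancellation actually gives (and what you need) is that \emph{no} centered factor is independent of the other three, so the dependency graph on the four multi-indices has no isolated vertex, hence contains at least two edges, hence forces at least two index coincidences and the claimed $O(N^{-2})$. With that standard correction your argument closes and proves the same almost sure limit $\alpha^{\NB(\pi_f)}q^{\rc(\pi)+2\InNB(\pi_f)}$.
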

\begin{proof}
For a fixed $\pi_f\in\PB_{2,\e}(2n)$, we consider the classical
random variable  as below
\begin{align} \label{eq:klasycznazmienna}
\Y=\lim_{N\to \infty }N^{-n}\sum_{\substack{ i(1),\dots,i(2n)\in[N] \\ \ker\underline{i} = \pi} }\left[\Cof(\pi_f,{\underline{i}})\prod_{\substack{\{z,w\} \in \pi\\ f(\{z,w\})=1} }\state(\T^\ast_{i(z)} \T_{i(w)}) \prod_{\substack{\{z,w\} \in \pi\\ f(\{z,w\})=-1}} \state( \T^\ast_{i(z)} \T^\prime_{i(w)})\right].
\end{align}
We claim that $\lim_{N\to \infty}\Y=\lambda_{\pi_f}.$
By $\E(\Y)$ we denote the expectation of random variable $\Y$ with respect to $\Pro$. 
The first goal is to compute $\E(\Y)$, namely we will show that expected value in square bracket of equation \eqref{eq:klasycznazmienna} does not depend 
on index $\underline{i}.$  By the independence assumption on classical random variable (established by $\ker\underline{i} = \pi$) it suffices to evaluate 
the expected value of a corresponding term separately  
for each crossing and nesting partition (contained in $\Cof(\pi_f,{\underline{i}})$)  and suitable coloring from equation \eqref{eq:klasycznazmienna}  
(related to $\state$). 
\newline
\noindent Case 1. We compute expectation of a part  associated  with $\state$.
 Let us first observe that 
    $$
    \state( \T^\ast_{i} \T^\epsilon_{i})= \left\{ \begin{array}{ll}
1 & \textrm{if $\epsilon=\1$}\\
\Psi_{i} & \textrm{if $\epsilon=\prime $},
\end{array} \right.
    $$
    which immediately implies that for $\ker\underline{i} = \pi$, we have
    $$\E\Big(\prod_{\substack{\{z,w\} \in \pi\\ f(\{z,w\})=1} }\state(\T^\ast_{i(z)} \T_{i(w)}) \prod_{\substack{\{z,w\} \in \pi\\ f(\{z,w\})=-1}} \state( \T^\ast_{i(z)} \T^\prime_{i(w)})\Big)=\alpha^{\NB(\pi_f)}.$$
\noindent Case 2. Now we move to the crossing partition, then we can compute the expected value directly form relation \eqref{eq:relationimprtantinproof1}  
and \eqref{eq:relationimprtantinproof2}, namely 
\begin{align*} 
&\E \big(\muh_{\ast,\Krop}(i(z_k),i(w_j))\big)=\left\{ \begin{array}{llll}
 \E\big(\muh(i(z_k),i(w_j))\big)=q & \textrm{if $i(z_k)<i(w_j),\text{ }\Krop=\1$}\\
\E\big(\muh(i(z_k),i(w_j))\big)=q & \textrm{if $i(z_k)>i(w_j),\text{ }\Krop=\1$}
\\
\E\big(\mutylda(i(z_k),i(w_j))\big)=q & \textrm{if $i(z_k)<i(w_j),\text{ }\Krop=\prime$}
\\
\E\big(\muh^2(i(z_k),i(w_j))\big)\E\big(\mutylda(i(z_k),i(w_j))\big)=q & \textrm{if $i(z_k)>i(w_j),\text{ }\Krop=\prime$}.
\end{array} \right.
\end{align*}
Summarizing,  we can say that on average (independently from coloring) each crossing contributes to $q$.
  
\noindent  Case 3. The last situation is the nesting partition. In this case we should evaluate  
\begin{align*}
&\L=\E\big(\muh_{\ast,\ast}(i(z_j),i(z_k))\muh_{\ast,\Krop}(i(z_j),i(w_k))\big)\text{ for } \Krop\in\{\1,\prime\}.
\intertext{To do this, let us divide situation into two steps $(a)$ and $(b)$ and similarly as previously use equations \eqref{eq:relationimprtantinproof1}  
	and \eqref{eq:relationimprtantinproof2}.
 In step $(a)$ we assume that $\Krop=\1$ -- this means that the pair $\{z_k,w_k\}$ has color $1$. If $i(z_j)=i(w_j)<i(z_k)=i(w_k)$, then }
 &\L=\E\big(\muh^2(i(z_j),i(w_k))\big)=1.
\intertext{Otherwise if $i(z_j)=i(w_j)>i(z_k)=i(w_k)$, then expectations are given as}
 &\L=\E\big(\muh^{-1}(i(z_j),i(w_k))\muh(i(z_j),i(w_k))\big)=1.
   \intertext{In the second step  $(b)$  $\Krop=\prime$ (= pair $\{z_k,w_k\}$ has color $-1$). If $i(z_j)=i(w_j)<i(z_k)=i(w_k)$, then }
 &\L=\E\big(\muh(i(z_j),i(w_k))\big)\E\big(\mutylda(i(z_j),i(w_k))\big)=q^2.
\intertext{On the other hand, when $i(z_j)=i(w_j)>i(z_k)=i(w_k)$, then  }
 &\L=\E\big(\muh^{-1}(i(z_j),i(w_k))\muh^2(i(z_j),i(w_k))\mutylda(i(z_j),i(w_k))\big)=q^2.
\end{align*}
Finally, each nesting contributes a factor $1$ or $q^2$ if covered pair has color $1$ or $-1$, respectively, and we have 
 \begin{align*}
 \begin{split}\E(\Y)=N^{-n}\sum_{\substack{ i(1),\dots,i(2n)\in[N] \\ \ker\underline{i} = \pi} }\alpha^{\NB(\pi_f)}q^{\rc(\pi)+2 \InNB(\pi_f)}= \alpha^{\NB(\pi_f)}q^{\rc(\pi)+2 \InNB(\pi_f)}N^{-n}{N\choose n}n!.
 \end{split}
\end{align*}
Thus  $\lim_{N\to \infty}\E(\Y)=\alpha^{\NB(\pi_f)}q^{\rc(\pi)+2 \InNB(\pi_f)}.$ 
Now it remains  to show that  $\lim_{N\to \infty}\Y=\E(\Y)$  (in the almost surely sense). For every  $\eta>0$,  we have 
\begin{align*}
\Pro\Big(\bigcup_{M\geq N} \{|\YM-\E(\YM)|>\eta\}\Big)&\leq \sum_{M\geq N} \Pro\left(\{|\YM-\E(\YM)|>\eta\}\right)\\&\leq \sum_{M\geq N} \E\left(|\YM-\E(\YM)|^2\right)/\eta^2.
 \end{align*}
Now we will focus on estimating the variance $Var(\YM)=\E\left(|\YM-\E(\YM)|^2\right).$
Let us observe that 
\begin{align}
& Var(\YM)=M^{-2n}\sum_{\substack{ i(1),\dots,i(2n)\in[M] \\ \ker\underline{i} = \pi\\
k(1),\dots,k(2n)\in[M] \\ \ker\underline{k} = \pi 
 } }Cov\Big(\Cof(\pi_f,{\underline{i}})\prod_{\substack{\{z,w\} \in \pi\\ f(\{z,w\})=-1}}\Psi_{i(w)},\Cof(\pi_f,{\underline{k}})  \prod_{\substack{\{z,w\} \in \pi\\ f(\{z,w\})=-1}}\Psi_{k(w)}\Big).
\label{eq:drugimoment}
\end{align}
\textbf{Firstly,} suppose that the set $\{i(1),\dots, i(2n)\}\cap
 \{k (1),\dots, k (2n)\}$ contains at most one element. 
If they are disjoint, then by the independence
assumption (on classical random variable), the corresponding  covariances vanish. 
Assume further that these two sets have exactly one common element. We would like to emphasize
that under this presupposition factors $\Cof(\pi_f,{\underline{i}})$ and $\Cof(\pi_f,{\underline{k}})$ are still independent, so let us consider 
the three different situations. 
\newline
\noindent
Situation 1. 
Assume that our common index corresponds to two blocks of $\pi$, with sign $1$. In this case random variables $\{\Psi_i\}_{i\in \N}$, which appear 
in equation \eqref{eq:drugimoment}, are independent, so the corresponding covariance vanishes.
\newline
\noindent
Situation 2. Assume that our common index corresponds to block of $\pi$ with opposite sign (both of them do not have color $-1$). Then random variables
 $\{\Psi_i\}_{i\in \N}$, which appear in equation \eqref{eq:drugimoment}, are still independent. 
Thus we have analogues to Situation 1, i.e. such indices do not contribute to the variances because of independence.  
\newline
\noindent
Situation 3. In the last case common index corresponds to two blocks of $\pi$, with color $-1$. The modulus of sums over these indices can be bounded by 
$ M^{-2} \NB^2(\pi_f) \sum_{i=1}^\infty Var(\Psi_i)$.  The main points of explanation of this 	
boundary are following. 
Let us assume that indices $\underline{i}$ and $\underline{k}$ with common elements correspond to pair $\{a,a'\}$ and $\{b,b'\}$ (from $\pi$, maybe the same), 
respectively, i.e.  $i(a)=i(a')=k(b)=k(b')$. Then we can bound the corresponding summands as follows
\begin{align*}
& \Big| \sum_{\substack{ i(1),\dots,i(2n)\in[M] \\ \ker\underline{i} = \pi\\
k(1),\dots,k(2n)\in[M] \\ \ker\underline{k} = \pi 
 } }Cov(\cdot)\Big|= \Big|\sum_{i(a)=1}^M\sum_{\substack{i(1),\dots, \check{i}(a),\dots,\check{i}(a'),\dots, i(2n)\in[M] \\ \ker\underline{i} = \pi\\
k(1),\dots,\check{k}(b),\dots,\check{k}(b'),\dots,k(2n)\in[M] \\ \ker\underline{k} = \pi 
 } }Cov(\cdot)\Big|=
 \intertext{by  independece  }
& \Big|\sum_{i(a)=1}^M\sum_{\substack{ \check{i}(a),\dots,\check{i}(a')\in[M] \\ \ker\underline{i} = \pi\\
\check{k}(b),\dots,\check{k}(b')\in[M] \\ \ker\underline{k} = \pi 
 } }\E\Big(\Cof(\pi_f,{\underline{i}})\Cof(\pi_f,{\underline{k}})\prod_{\substack{\{z,w\} \in \pi\setminus\{\{a,a'\}\}\\ f(\{z,w\})=-1}}\Psi_{i(w)}  \prod_{\substack{\{z,w\} \in \pi\setminus\{\{b,b'\}\}\\ f(\{z,w\})=-1}}\Psi_{k(w)}\Big)Var(\Psi_{i(a)})\Big|\\
 &=\Big|\sum_{\substack{ \check{i}(a),\dots,\check{i}(a')\in[M] \\ \ker\underline{i} = \pi\\
\check{k}(b),\dots,\check{k}(b')\in[M] \\ \ker\underline{k} = \pi 
 } }\alpha^{2\NB(\pi_f)-2}q^{2\rc(\pi)+4 \InNB(\pi_f)}\sum_{i(a)=1}^M Var(\Psi_{i(a)})\Big|\leq M^{2n-2} \sum_{i=1}^\infty Var(\Psi_i),
\end{align*}
where for the reader's convenience we skip some indices and expressions in notation (these omissions follow from the context of the proof and we hope that 
this is not an impediment). Factor $\NB^2(\pi_f)$, which appears in the upper bound follows from counting how many times we can assign common indices to a 
block with color $-1$.
Finally, we show that if we have exactly two common elements, then there exists
a $C_1$ (independent from $M$) such that our variance is less than $ M^{-2}C_1$. 

\noindent
\textbf{Secondly,}  it remains to consider the $M^{2n-2}$ rest terms of the sums \eqref{eq:drugimoment}, which by the Cauchy-Bunyakovsky-Schwarz inequality 
is less than $M^{-2n}M^{2n-2}\times C _2.$ 
Summarizing these two estimations above we get 
\begin{align*}
 \begin{split}\E(|\YM-\E(\YM)|^2)\leq M^{-2}\times (C_1+C _2). 
 \end{split}
\end{align*} 
Since the series  $\sum_{M=0}^\infty M^{-2}$ converges, we have $\lim_{N\to \infty}\sum_{M\geq N} \E\left(|\YM-\E(\YM)|^2\right)=0$ and therefore 
$$\Pro\Big(\bigcap_{N\geq 1}\bigcup_{M\geq N} \{|\YM-\E(\YM)|>\eta\}\Big)=\lim_{N\to \infty}\Pro\Big(\bigcup_{M\geq N} \{|\YM-\E(\YM)|>\eta\}\Big)=0,$$
which finishes the proof.\end{proof}
\noindent
The main result of the paper is the following corollary, which refers to the goal specified in Subsection \ref{subgoal}. 
\begin{Cor} \label{corassyptoptic}
Combining Theorem \ref{TwCLT} with Proposition \ref{prop:stochastycznainteroplacja}, and comparing the resulting moments with those given in Subsection \ref{sectionmoment},
immediately yields the desired asymptotic models for the  field operators on the $(\alpha,q)$--Gaussian operator.
Indeed, letting $Z_N = \Su_N + \Su_N^\ast+\Su_N^\prime$, $\E(\Psi_i)=\alpha\langle e,\overline{e}\rangle$ and running over all $\e(i)\in\{\ast,\prime,\1\}$ in 
Theorem \ref{TwCLT} yields
\begin{align*}
\lim_{N\to \infty}\state_N(Z_N^{2n})&=\sum_{\pi_f\in\PB_{2}(2n)}\alpha^{\NB(\pi_f)}q^{\rc(\pi)+2 \InNB(\pi_f)} \prod_{\substack{\{i,j\} \in \pi\\ f(\{i,j\})=-1}} \langle e,\overline{e}\rangle=\langle\Omega, \G^{2n}(e)\Omega\rangle_{\alpha,q}.
\end{align*}
where $e$ is the element of the orthonormal basis of $H_\R.$ \end{Cor}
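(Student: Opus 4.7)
The plan is to assemble the corollary from the two main results by direct expansion. I would first use multilinearity of $\state_N$ to write
\[
\state_N(Z_N^{2n}) \;=\; \sum_{\epsilon\in\{\1,\ast,\prime\}^{2n}} \state_N\bigl(\Su_N^{\epsilon(1)}\cdots\Su_N^{\epsilon(2n)}\bigr).
\]
Working on the event of full probability on which Proposition~\ref{prop:stochastycznainteroplacja} furnishes the limit~\eqref{eq:limit} for each of the finitely many relevant $(\pi_f,\epsilon)$, the matrix models $\T_{N,i},\T^\ast_{N,i},\T^\prime_{N,i}$ constructed in Section~\ref{sekcjaCLT} satisfy Assumption~\ref{warunek1}: parts (a)--(d) hold pointwise in the randomness, as verified just after their definition, while (e) holds almost surely with the explicit value
\[
\lambda_{\pi_f} \;=\; \bigl(\alpha\langle e,\overline{e}\rangle\bigr)^{\NB(\pi_f)} q^{\rc(\pi)+2\InNB(\pi_f)},
\]
obtained by substituting $\E(\Psi_i)=\alpha\langle e,\overline{e}\rangle$ in place of the parameter $\alpha$ appearing in Proposition~\ref{prop:stochastycznainteroplacja}. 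Theorem~\ref{TwCLT} then applies term by term and produces $\lim_N \state_N(\Su_N^{\epsilon(1)}\cdots\Su_N^{\epsilon(2n)}) = \sum_{\pi_f\in\PB_{2,\epsilon}(2n)}\lambda_{\pi_f}$.

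The remaining steps are purely combinatorial. By the very definition of $\PB_{2,\epsilon}(2n)$ in Subsection~\ref{sect:partition}, each $\pi_f\in\PB_2(2n)$ corresponds to a unique $\epsilon$, obtained by setting $\epsilon(z_i)=\ast$ at every left endpoint and $\epsilon(w_i)\in\{\1,\prime\}$ at every right endpoint according to whether $f(\{z_i,w_i\})$ equals $+1$ or $-1$. Hence $\bigsqcup_{\epsilon}\PB_{2,\epsilon}(2n)=\PB_2(2n)$, and summing over $\epsilon$ collapses the outer sum into a single sum over $\PB_2(2n)$. Redistributing $\langle e,\overline{e}\rangle^{\NB(\pi_f)}$ as one copy of $\langle e,\overline{e}\rangle$ per negative block, and noting that each positive block contributes a trivial factor $\langle e,e\rangle=1$ since $e\in H_\R$ is a unit vector, one arrives at the stated formula.

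The identification with $\langle\Omega,\G^{2n}(e)\Omega\rangle_{\alpha,q}$ is then immediate from the explicit moment formula for the type-B Gaussian recalled in Subsection~\ref{sectionmoment}, specialized to $x_1=\dots=x_{2n}=e$. Essentially no hard analytic step is required here; the only mild subtlety is that Theorem~\ref{TwCLT} is formulated deterministically, while the conclusion is an almost-sure statement. This is handled by a standard countable-intersection argument: for each $n$ there are only finitely many pairs $(\epsilon,\pi_f)$ contributing, so the null set on which~\eqref{eq:limit} fails can be taken uniform in those choices, and then uniform across all $n\in\mathbb{N}$ by intersecting countably many full-probability events.
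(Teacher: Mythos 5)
Your proposal is correct and follows essentially the same route as the paper: expand $Z_N^{2n}$ by multilinearity over $\epsilon\in\{\1,\ast,\prime\}^{2n}$, apply Theorem \ref{TwCLT} with the limits $\lambda_{\pi_f}$ supplied almost surely by Proposition \ref{prop:stochastycznainteroplacja}, use the disjoint decomposition $\bigsqcup_{\epsilon}\PB_{2,\epsilon}(2n)=\PB_{2}(2n)$, and match the result against the moment formula of Subsection \ref{sectionmoment}. Your explicit handling of the null-set bookkeeping and of the factor $\langle e,\overline{e}\rangle$ per negative block only makes precise what the paper leaves implicit.
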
 
\begin{remark}
(1). (The analog of Theorem 2 from \cite{S92}). In order to asymptotically realize the joint moments of $\G(e_{1})\cdots \G(e_{k})$ rather than the moments
of $\G(e)$ alone, it suffices to consider for $\e\in\{\ast,\1,\prime\}$ a sequence 
$$\Su_{N,k}^\e=\frac{1}{\sqrt{N}}\sum_{i=N(k-1)+1}^{Nk}\T^\e_{Nk,i},\quad k\in[N].$$ 
It is a partial sum built from non-intersecting subsets of $\T^\e$. Then we have for all $k\in \N$,  $i(1),\dots,i(k)\in\N$ and $Z_{N,k} = \Su_{N,k} + \Su_{N,k}^\ast+\Su_{N,k}^\prime$  
the following
\begin{align}
&\lim_{N\to \infty}\state_{N^2}(Z_{N,i(1)}\dots Z_{N,i(k)})=\langle\Omega, \G(e_{i(1)})\cdots \G(e_{i(k)})\Omega\rangle_{\alpha,q},
\end{align}
 where  $e_i$ is the element of the orthogonal basis of $H_\R$, which satisfies  $\langle e_i,\overline{e}_j\rangle=0$ for $i\neq j$ and  $\E(\Psi_i)=\alpha\langle e_k,\overline{e_k}\rangle$ for $i\in [N(k-1)+1,Nk]$. This result does not follow directly from Theorem \ref{TwCLT} and Proposition \ref{prop:stochastycznainteroplacja}, but the substantiation
 goes along the same lines as the proof of these two results. This is not entirely obvious
but we leave the formal proof to the reader, because it can be obtained
by modifications of results above. We just wanted to emphasize that the main point in the proof is to take care of the domain of the appropriate indices. 
\newline
(2). 
In the article \cite{BEH15} we show that the annihilator operator can be decomposed as $\B(x)= \r_q(x)+ \alpha  \ell_{q}(\bar{x})q^{N-1}$, where $ x \in H$ 
(for definitions of $\r_q$ and $\ell_{q}$ we refer the reader to \cite{BEH15} because the notation is not short).  Theorem \ref{TwCLT} really says something 
more about asymptotic behavior of $\Su_N, \Su_N^\ast$ and $\Su_N^\prime$.  
Namely,  under the assumption of Corollary \ref{corassyptoptic} and for all $k\in \N$, we have 
\begin{align*}
\lim_{N\to \infty}\state(\Su_N^{\e(1)}\dots\Su_N^{\e(k)})&=\sum_{\pi_f\in\PB_{2,\e}(2n)}\alpha^{\NB(\pi_f)}q^{\rc(\pi)+2 \InNB(\pi_f)} \prod_{\substack{\{i,j\} \in \pi\\ f(\{i,j\})=-1}} \langle e,\overline{e}\rangle\\&=\langle\Omega, \D(e)^{\e(1)}\dots\D(e)^{\e(k)}\Omega\rangle_{\alpha,q},
\end{align*}
where $\D^\ast(e)=\B^\ast(e)$, $\D(e)=\r_q(e)$ and $\D^\prime(e)= \alpha  \ell_{q}(\bar{e})q^{N-1}.$ 
\end{remark}
\noindent
\textbf{Open problem.}
 Formulate CLT for the Gaussian operators of type D \cite{BEH17}. This is completely unclear
to us how to modify the results from this article in order to get CLT of type D. 
\begin{center} Acknowledgments
\end{center}
 The work was supported by the Austrian Science Fund (FWF) Project No P 25510-N26 and the Narodowe Centrum Nauki grant 2014/15/B/ST1/00064.

\providecommand{\bysame}{\leavevmode\hbox to3em{\hrulefill}\thinspace}

\end{document}